\newtheorem{theorem}{Theorem}[section]
\newtheorem{utv*}{Proposition}
\newtheorem{hyp*}{Conjecture}
\newtheorem{lemma}[theorem]{Lemma}
\newtheorem{defin}{Definition}
\newtheorem{zamech}{Remark}
\newtheorem*{th*}{Theorem}
\newcommand{\av}[2]{\langle #1\rangle_{_{\scriptstyle #2}}}
\def\sli{\sum\limits}
\def\la{\lambda}
\def\R{\mathbb{R}}
\def\cEp{\varepsilon}
\def\cD{\mathcal{D}}
\newcommand{\cz}{Calder\'{o}n--Zygmund\ }
\newcommand{\dist}{\operatorname{dist}}
\newcommand{\pd}{\partial}
\newcommand{\om}{\omega}
\newcommand{\eps}{\epsilon}
\newcommand{\cP}{\mathcal{P}}
\newcommand{\cE}{\mathcal{E}}
\newtheorem{cor}[theorem]{Corollary}
\numberwithin{equation}{section}
\theoremstyle{definition}
\def\cyr{\fontencoding{OT2}\fontfamily{wncyr}\selectfont}
\DeclareTextFontCommand{\textcyr}{\cyr}
\newcommand{\sha}[0]{\ensuremath{\mathbb{S}
}}
\newcounter{vremennyj}
\begin{document}

\title{The proof of non-homogeneous $T1$ theorem via averaging of dyadic shifts}
\author{Alexander Volberg}
\makeatletter
\@namedef{subjclassname@2010}{
  \textup{2010} Mathematics Subject Classification}
\makeatother

\subjclass[2010]{42B20, 42B35, 47A30}



%
%

\keywords{\cz operators, dyadic shift, T1 theorem, 
   non-doubling measure}

   \begin{abstract}
We give again a proof of  non-homogeneous $T1$ theorem. Our proof consists of three main parts: a construction of a random ``dyadic'' lattice  as in \cite{NTV}, \cite{NTV2}; an estimate of matrix coefficients of a \cz operator with respect to random Haar basis if a {\it smaller} Haar support is {\it good} like in \cite{NTV2}; a clever averaging trick from ~\cite{Hyt}, \cite{Hyt1} which uses the averaging over dyadic lattices to decompose operator into dyadic shifts eliminating the error term that was present in  \cite{NTV}, \cite{NTV2}.
\end{abstract}

\date{}
\maketitle

\section{Probability space of random dyadic lattices. Non-homogeneous dyadic shifts}
\label{intro}
Consider a positive finite measure $\mu$ supported on a compact set $E$ lying in $\frac12Q_0$, the central cube of the unit cube $Q_0$ of $\R^d$. We recall a construction of probability space of dyadic lattices made in \cite{NTV2}.  Suppose $\cD_N= \cD_N(Q_0)$ denotes the dyadic grid of squares of size $2^{-N}$ in $Q_0$.  We continue this grid to the whole $\R^d$. Now for each such $Q$ there is $2^d$ choices of its father. As soon as one father is chosen, all others are fixed as parallel translation of it. Consider all choices of grids of fathers as equally probable. Now having one grid of fathers fixed, consider $2^d$ choices of their fathers. Choose one independently of the previous choice and again let all choices be equally probable. Continuing like that we build $D_{N-1}(\om)$, $D_{N-2}(\om)$, \dots  In the natural probability space of lattices  $(\Omega_0, \cP_0)$ just built, consider a subset $\Omega$ of lattices such that a cube $Q\in \cD_0$ contains $\frac12Q_0$. Consider $\cP=\frac{\cP_0}{\cP_0(\Omega)} 1_{\Omega}$ and the probability space $(\Omega, \cP)$ is what we will be using now.

In what follows all $\cD=\cD(\omega)=\cup_{k\le N} \cD_k$ are from $(\Omega, \cP)$. So let  $Q$ be in such a $\cD$ and let $Q_i$, $i=1,\dots, 2^d$ are its children.  For any $f\in L^1(\mu)$ we denote $\cE_Q f = \av{f}{1,\mu} 1_Q$,
$$
\cE_k:= \sum_{Q\in \cD_k} \cE_Q f\,,
$$
and 
$$
\Delta_k f = (\cE_{k+1} -\cE_{k})f, \;\Delta_Q f := \Delta_k f \cdot 1_Q\,, Q\in \cD_k\,.
$$
Now let $f\in L_0^2(\mu)$ subscript $0$ meaning that $\int f\, d\mu=0$. Then
$$
f= \sum_{Q\in \cD} \Delta_Q f\,,
$$
and
$$
\Delta_Q f = \sum_{i=1}^{2^d-1} (f, h_Q^i)_\mu h_Q^i\,,
$$
where $h_Q^i$ are called ($\mu-$ Haar functions and the have the following properties
\begin{itemize}
\item $(h_Q^i, h_R^j)_\mu = 0, Q\neq R$,
\item $(h_Q^i, h_Q^j)_\mu = 0, i\neq j$,
\item $\|h_Q^i\|_\mu = 1$,
\item $h_Q^i=\sum_{m=1}^{2^d-1} c_{Q,m} 1_{Q_m}$,
\item $|c_{Q,m}|\le 1/\sqrt{\mu(Q_m)}$.
\end{itemize}
Above $Q_m$'s are children of $Q$.

\begin{defin}
\label{1}
Cube $Q\in \cD(\omega)$ is called {\bf good}  ($(r, \gamma)$-good) if for any $R$ in the same $\cD(\omega)$ but such that
$\ell(R) \ge 2^r \ell(Q)$ one has
\begin{equation}
\label{g}
\dist (Q, sk(R)) \ge \ell(R)^{1-\gamma} \ell(Q)^\gamma\,,
\end{equation}
where $sk(R) := \bigcup_{m=1}^{2^d-1}\pd Q_m$, again $Q_m$'s being all children of $Q$.
\end{defin}

\bigskip

Given $Q\in \cD_k$ we denote $g(Q)=k$. Our main ``tool'' is going to be the famous ``dyadic shifts''. But they will be with respect to non-homogenous measure. Their typical building blocks will be Haar projections with respect to non-homogeneous measure $\mu$.  This is the only slight difference of this note with \cite{Hyt}, \cite{Hyt1}, \cite{HPTV}.

\begin{defin}
Precisely, we call by $\sha_{m,n}$ (shift of complexity $(m,n)$, or shift of complexity $\max (m,n)$) the operator given by the  kernel
$$
f\rightarrow \sum_{L\in \cD} \int_L a_L(x,y)f(y)\,d\mu(y)\,,
$$
where
\begin{equation}
\label{aL}
a_L(x,y) =\sum_{\substack{ Q\subset L, R\subset L\\ g(Q)= g(L)+m, \, g(R)= g(L)+n}}c_{L,Q,R} h_Q^i(x)h_R^j(y)\,,
\end{equation}
where $h_Q^i:=h_Q^{\mu,i}$, $h_R^j:= h_R^{\mu,j} $ are Haar functions (as above) orthogonal and  normalized in $L^2(d\mu)$, and $|c_{L,Q,R}|$ are such that
\begin{equation}
\label{c}
\sum_{Q, R}|c_{L, Q, R}|^2 \le 1\,.
\end{equation}
\end{defin}
Often we will skip superscripts $i, j$. One will always skip superscript $\mu$.

\begin{zamech}
In particular, it is easy to see that if $a_L$ has form \eqref{aL} and satisfies
\begin{equation}
\label{growaL}
|a_L(x,y)|\le \frac1{\mu(L)}\,,
\end{equation}
then \eqref{c} is automatically satisfied, and we are dealing with dyadic shift.
\end{zamech}

\begin{zamech}
A little bit different but basically equivalent definition can be done like that: operator sending $\Delta_L^n(L^2(\mu))$ to itself and having the kernel $a_L(x,y)$ satisfying estimate \eqref{growaL} is called a local dyadic shift of order $n$. Here $\Delta_L^n(L^2(\mu))$ denotes the space of $L^2(\mu)$ functions supported on $L$ and having constant values on children $Q$ of $L$ such that $g(Q)= g(L)+ n+1$.
Now dyadic shift of order $n$ is an operator of the form $\sha_n\,f := \sum_{L\in \cD} \int_L a_L(x,y)f(y)dy$, where $a_L$ corresponds to local shift of order $n$.
\end{zamech}

All these definitions bring us operators satisfying obviously
\begin{equation}
\label{complexity}
\|\sha_{m,n}\|_{L^2(\mu)\rightarrow L^2(\mu)} \le 1\,,
\end{equation}
\begin{equation}
\label{order}
\|\sha_{n}\|_{L^2(\mu)\rightarrow L^2(\mu)} \le n+1\,,
\end{equation}

We also need generalized shifts, but only of complexity $(0,1)$.

\begin{defin}
Let $ \Pi f:= \sum_{L\in \cD} \av{f}{L,\mu} \sqrt{\mu(L)}\sum_{\ell\subset L, |\ell| = 2^{-s} |L|}c_{L,\ell}\cdot h_{\ell}^j$, where $\{c_{L,\ell}\}$ satisfy {\it not just} the condition $\mu(L)\sum_{\ell\subset L, |\ell| = 2^{-s} |L|} |c_{\ell,L}|^2 \le 1$  that would be required for the usual $(0,s)$-shift normalization condition, but a rather stronger Carleson condition
\begin{equation}
\label{carle}
\sum_{L\subset R, \, L\in \cD} \mu(L)\sum_{\ell\subset L, |\ell| = 2^{-s} |L|} |c_{\ell,L}|^2 \le \mu(R)\,.
\end{equation}
Then $\Pi$ is called a generalized shift of complexity $(0,s)$.
\end{defin}

\begin{zamech}
Notice that here $h_L^j$ is a $\mu$-Haar function. But we already skipped $\mu$ and we will be skipping $j$ as well. In fact, a morally same definition would be 
$$
 \Pi f:= \sum_{L\in \cD} \av{f}{L,\mu}\sqrt{\mu(L)}\sum_{\ell\subset L, |\ell| = 2^{-s} |L|}c_{L,\ell} \Delta_\ell\,,
 $$
 where $c_{L,\ell}$ satisfy \eqref{carle}.
 Notice that all these $\Pi$'s are variations of  familiar paraproduct operators. They are of course also bounded in $L^2(\mu)$ by a universal constant (actually by constant $2$).
 \end{zamech}
 
 \section{Main Theorem}
 \label{main}
 
 We are in a position to formulate our main results. Recall when operator $T$ is called an   operator with \cz kernel of order $m$.
 
 \begin{defin}
 \label{3}
Let $X$ be a geometrically doubling metric space.

Let $\lambda(x,r)$ be a positive function, increasing and doubling in $r$, i.e. $\lambda(x, 2r)\leqslant C\lambda(x,r)$, where $C$ does not depend on $x$ and $r$.

We call $K(x,y)\colon X\times X \to \R$  a \cz kernel, associated to a function $\lambda$, if
\begin{align}
&|K(x,y)|\leqslant \min\left( \frac{1}{\lambda(x,|xy|)}, \frac{1}{\lambda(y,|xy|)}\right),\\
& |K(x,y)-K(x',y)|\leqslant \frac{|xx'|^\cEp}{|xy|^\cEp \lambda(x, |xy|)}, \; \;|xy|\geqslant C|xx'|,\\
&|K(x,y)-K(x,y')|\leqslant \frac{|yy'|^\cEp}{|xy|^\cEp \lambda(y, |xy|)}, \; \;|xy|\geqslant C|yy'|.
\end{align}
By $B(x,r)$ we denote the ball in $|.|$ metric, i.e., $B(x,r)=\{y\in X\colon |yx|<r\}$.

Let $\mu$ be a measure on $X$, such that $\mu(B(x,r))\leqslant C\lambda(x,r)$, where $C$ does not depend on $x$ and $r$.
We say that $T$ is an operator with \cz kernel $K$ on our metric space $X$ if
\begin{align}
 Tf(x) = \int K(x,y)f(y)d\mu(y), \; \forall x\not\in \textup{supp}\mu, \; \forall f\in C_0^\infty\,.
\end{align}
We call $T$ a  Calder\'on-Zygmund  operator, if it is an operator with \cz kernel, and on the top of that
$$
 T\;\text{is bounded}\; L^2(\mu)\rightarrow L^2(\mu)\,.
 $$
\end{defin}

Let us notice that $\R^d$ is of course a geometrically doubling metric space with the usual euclidean metric. 
Specify the above definition to
$$
\la(x, r)=r^m,\; |xy|=|x-y|\,,
$$
then we have a \cz kernel of order $m$ in $\R^d$ and \cz operators of order $m$ in $\R^d$. Measure $\mu$ satisfying $\mu(B(x,r))\le Cr^m$ (or more generally $\mu(B(x,r))\leqslant C\lambda(x,r)$) but not satisfying any estimate from below is called {\it non-homogeneous}. The corresponding \cz operator then is also called {\it non-homogeneous}.

\begin{theorem} 
\label{rsh}
Let $\mu(B(x,r)) \le r^m$. Let $T$ be a \cz operator  of order $m$ in $\R^d$. Then there exists a probability space of dyadic lattices $(\Omega, \cP)$ such that
\begin{equation}
\label{deco}
T =c_{1,T}\int_\Omega \Pi(\omega) \,d\cP(\omega) + c_{2,T}\int_\Omega \Pi^*(\omega) \,d\cP(\om) +c_{3,T}\sum_{n=0}^{\infty} 2^{-n\eps_T} \int_\Omega \sha_n(\omega) \, d\cP(\omega)\,.
\end{equation}
Moreover, $\eps_T>0$. Constants $c_{1,T}, c_{2, T}, c_{3,T}$ depend on the \cz parameters of the kernel, on $m$ and $d$, and on the best constant in the so-called $T1$ conditions:
\begin{equation}
\label{t1}
\|T1_Q\|^2_{2, \mu}  \le C_0 \mu(Q)\,,
\end{equation}\begin{equation}
\label{tstar1}
\|T^*1_Q\|^2_{2, \mu}  \le C_0 \mu(Q)\,,
\end{equation}
\end{theorem}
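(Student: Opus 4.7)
The plan is to expand $\langle Tf, g\rangle_\mu$ in the random Haar basis of $\cD(\omega)$ for fixed $f, g \in L_0^2(\mu)$, organize the resulting sum by dyadic position, and integrate against $d\cP(\omega)$. For each $\omega$ one has
\begin{equation*}
\langle Tf, g\rangle_\mu \;=\; \sum_{Q, R \in \cD(\omega)} \langle T h_Q, h_R\rangle_\mu \, (f, h_Q)_\mu \, (g, h_R)_\mu,
\end{equation*}
with the indices $i, j$ suppressed. Following the goodness reduction from \cite{NTV2}, I restrict this sum to admissible pairs whose smaller cube is $(r, \gamma)$-good, at the cost of a universal constant prefactor; this reduction is legitimate under $\int_\Omega d\cP$ because the construction of $\Omega$ makes the event ``$Q$ is good'' depend only on the relative position of $Q$ in the dyadic grids strictly above it, hence independent of the Haar coefficient $(f, h_Q)_\mu$ living on scales at most $\ell(Q)$.

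Next, I group each admissible pair by its smallest common dyadic ancestor $L \in \cD(\omega)$, setting $m := g(Q) - g(L)$ and $n := g(R) - g(L)$. The bilinear form then takes the shape of a sum over $m, n$ of shift-like pieces with kernels $a_L^{m, n}$ of the form \eqref{aL} and coefficients $c_{L, Q, R} = \langle T h_Q, h_R\rangle_\mu$. For admissible pairs in which the smaller cube $Q$ is separated from the child of $R$ containing it, combining the cancellation $\int h_Q\, d\mu = 0$ with H\"older regularity of the kernel and the goodness inequality \eqref{g} (which forces $\dist(Q, R) \gtrsim \ell(L)^{1-\gamma}\ell(Q)^\gamma$) yields the core matrix estimate
\begin{equation*}
|\langle T h_Q, h_R\rangle_\mu| \;\le\; C \cdot 2^{-\max(m, n)\eps_T} \cdot \frac{\sqrt{\mu(Q) \mu(R)}}{\mu(L)},
\end{equation*}
for some $\eps_T > 0$ depending on $\cEp, \gamma, m, d$. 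After rescaling, this both produces the decaying factor $2^{-n \eps_T}$ required in \eqref{deco} and verifies the normalization \eqref{c}. In the nested cases ($Q = L$ or $R = L$, with the other cube strictly inside), kernel smoothness alone is insufficient and one invokes the testing bounds \eqref{t1}, \eqref{tstar1}: the portion with $R = L$ and $Q \subsetneq L$ assembles, when summed over $L$, into a paraproduct of the form $\Pi$ whose coefficients satisfy the Carleson bound \eqref{carle} precisely by applying \eqref{tstar1} to $\1_R$ for all $R \supset L$; symmetrically, the case $Q = L$ produces $\Pi^*$ via \eqref{t1}. Any remainder that does not fit either template falls back into the shift framework.

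Finally, I integrate against $d\cP(\omega)$. Uniform $L^2(\mu)$-boundedness of the building blocks is guaranteed by \eqref{complexity}, \eqref{order}, and the paraproduct $L^2(\mu)$-bound noted after \eqref{carle}, which permits the exchange of summation and expectation. The main obstacle, and the crux of the improvement over \cite{NTV, NTV2}, is the Hyt\"onen averaging step \cite{Hyt, Hyt1}: one must show that the ``bad cube'' error that obstructed the decomposition previously actually vanishes in expectation (thanks to the probabilistic independence built into $(\Omega, \cP)$), and that every off-diagonal or paraproduct remainder is either absorbed into some $\sha_n(\omega)$ or killed by the average. The final bookkeeping identifies $c_{1, T}, c_{2, T}, c_{3, T}$ as depending on the CZ parameters $(m, \cEp)$, the dimension $d$, the goodness parameters $(r, \gamma)$, and the testing constant $C_0$; $\eps_T$ emerges as a positive fraction of $\cEp$ after a correction from $\gamma$.
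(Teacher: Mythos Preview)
Your outline captures the overall architecture correctly---Haar expansion, Hyt\"onen's averaging to reduce to good smaller cubes, splitting into nested and disjoint pairs, and extracting paraproducts from the nested part via the $T^*1$ testing condition. Two points, however, are genuine gaps rather than omitted details.

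\textbf{The disjoint (outer) sum cannot be organized by the least common ancestor.} You propose to group every pair $(Q,R)$ by its smallest common dyadic ancestor $L$ and then claim
\[
|\langle T h_Q, h_R\rangle_\mu| \le C\, 2^{-\max(m,n)\eps_T}\,\frac{\sqrt{\mu(Q)\mu(R)}}{\mu(L)},\qquad m=g(Q)-g(L),\ n=g(R)-g(L).
\]
For disjoint $Q,R$ this fails: two adjacent cubes of comparable size can sit on opposite sides of dyadic boundaries at arbitrarily many scales, so their least common ancestor $L$ has $\ell(L)$ enormously larger than $D(Q,R):=\ell(Q)+\dist(Q,R)+\ell(R)$. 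The coefficient $\langle T h_Q, h_R\rangle_\mu$ is controlled only by $D(Q,R)$ (see the paper's estimate \eqref{long}), so there is no decay in $\max(m,n)$. The paper resolves this with an additional averaging trick: it fixes a cube $L(Q,R)$ of side $\approx D(Q,R)$ containing $R$, observes that the probability $p(Q,R)$ that $Q$ also lies in $L(Q,R)$ is bounded below by a dimensional constant, and reweights the expectation by $1/p(Q,R)$ on the ``nice'' lattices. Only after this step does the outer sum become a sum of $(s,t)$-shifts with the required exponential decay. Your proposal omits this mechanism entirely.

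\textbf{The nested (inner) sum does not reduce directly to a paraproduct.} You assign the entire case $R=L$, $Q\subsetneq L$ to $\Pi$. In the paper the nested coefficient is first split (Lemma~\ref{tief}) as
\[
\langle T h_Q, h_R\rangle_\mu = \langle h_R\rangle_{S(R)}\,(h_Q,\Delta_Q T^*1)_\mu + t_{Q,R},
\]
and only the first term feeds the paraproduct $\pi$; the remainder $t_{Q,R}$ goes back into the shift sum via the estimates \eqref{t2qr}--\eqref{t1qr}. Note also that those estimates use $\ell(R)^m$ (equivalently $D(Q,R)^m$), not $\mu(L)$, in the denominator---this is where the growth condition $\mu(B(x,r))\le r^m$ enters and where the choice $\gamma=\eps/(2(m+\eps))$ is made. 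Your displayed matrix estimate with $\mu(L)$ in the denominator is not what is actually proved, and in the non-homogeneous setting $\mu(L)$ has no lower bound, so one cannot pass between the two freely.

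A minor ordering issue: the Hyt\"onen identity (Theorem~\ref{rand1}) is applied \emph{first}, turning the full expectation into exactly twice the expectation over pairs whose smaller cube is good; only afterwards is the good sum decomposed into shifts. Your final paragraph suggests the averaging is invoked at the end to ``kill remainders,'' which inverts the logic.
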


\begin{zamech}
The same thing holds on general geometrically doubling metric space $X$ (not just $\R^d$) and any  non-homogeneous \cz operator 
having \cz kernel in the generalized sense above. Of course measure should satisfy
$$
\mu(B(x,r))\leqslant C\lambda(x,r)\,.
$$
We prefer to prove the $\R^d$-version just for the sake of avoiding some slight technicalities. For example, the construction of the suitable probability space of random dyadic lattice on $X$ is a bit more involved than such construction in $\R^d$. See \cite{NRV}.
\end{zamech}

From this we obtain immediately the non-homogeneous $T1$ theorem:
\begin{cor}
\label{T1cor}
Let operator $T$ be a \cz operator  of order $m$.  Let  $\mu$ be a measure of growth at most $m$: $\mu(B(x,r)) \le r^m$.
Then $\|T\|_{2,\mu} \le C(C_0, d, m, \eps)$, where $\eps$ is from Definition \ref{3}, and $C_0$ is from  $T1$ assumptions \eqref{t1}, \eqref{tstar1}.
\end{cor}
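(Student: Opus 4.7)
The plan is to read the corollary as a direct consequence of the representation formula \eqref{deco} in Theorem \ref{rsh}, combined with the uniform operator-norm bounds already recorded for the building blocks. Since $\cP$ is a probability measure on $\Omega$, Minkowski's integral inequality gives for each of the three terms in \eqref{deco}
\[
\left\|\int_\Omega \Pi(\omega)\,d\cP(\omega)\right\|_{L^2(\mu)\to L^2(\mu)}
\le \int_\Omega \|\Pi(\omega)\|_{L^2(\mu)\to L^2(\mu)}\,d\cP(\omega),
\]
and analogously for $\Pi^*(\omega)$ and $\sha_n(\omega)$. The first two integrals are each at most $2$ by the paraproduct bound stated in the remark after the definition of generalized shifts (the bound transfers to $\Pi^*$ by duality), so these contributions are controlled by $2|c_{1,T}|+2|c_{2,T}|$.

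For the shift series, I would invoke \eqref{order}, which gives $\|\sha_n(\omega)\|_{L^2(\mu)\to L^2(\mu)}\le n+1$ pointwise in $\omega$, so
\[
\left\|\sum_{n=0}^\infty 2^{-n\eps_T}\int_\Omega \sha_n(\omega)\,d\cP(\omega)\right\|_{L^2(\mu)\to L^2(\mu)}
\le \sum_{n=0}^\infty 2^{-n\eps_T}(n+1),
\]
and this series converges because $\eps_T>0$, with sum depending only on $\eps_T$. Combining the three pieces via the triangle inequality gives
\[
\|T\|_{L^2(\mu)\to L^2(\mu)}
\le 2|c_{1,T}|+2|c_{2,T}|+|c_{3,T}|\sum_{n=0}^\infty (n+1)2^{-n\eps_T}.
\]

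It only remains to observe that by the last sentence of Theorem \ref{rsh}, the constants $c_{1,T},c_{2,T},c_{3,T}$ and the exponent $\eps_T$ depend only on the \cz parameters of the kernel (in particular on the H\"older exponent $\eps$ from Definition~\ref{3}), on $m$ and $d$, and on the testing constant $C_0$ from \eqref{t1}--\eqref{tstar1}. Hence the right-hand side above is a quantity $C(C_0,d,m,\eps)$, which is exactly the conclusion of the corollary. There is really no hard step here: the whole content of the non-homogeneous $T1$ theorem has been absorbed into the decomposition \eqref{deco}, and the corollary is just bookkeeping of norms plus the summability of $(n+1)2^{-n\eps_T}$.
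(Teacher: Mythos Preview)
Your proof is correct and matches the paper's own argument essentially verbatim: the paper simply observes (in the remark following the corollary) that the $T1$ theorem is a direct consequence of the decomposition \eqref{deco} because the shifts $\sha_n$ have norms at most $n+1$ by \eqref{order}, the paraproducts are bounded by the universal constant $2$, and the exponentially decaying factor $2^{-n\eps_T}$ makes the series converge. You have merely spelled out the Minkowski step and the dependence of the constants more carefully than the paper does.
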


\begin{zamech}
This corollary ($T1$ theorem)  has a long story, if $\mu=m_d$ it was proved by David--Journ\'e. For homogeneous (doubling) measures $\mu$ it was proved by Christ \cite{Chr}.  In the case of  non-homogeneous $\mu$, the proof  is slightly involved,  it has a relatively short exposition in \cite{NTV2}. Here we follow the lines of \cite{NTV2} to prove the decomposition to random dyadic shift Theorem \ref{rsh}. Just \cite{NTV2} is not quite enough for that goal, and we use a beautiful step of Hyt\"onen as well.
Then non-homogeneous $T1$ theorem is just a corollary of the decomposition result, because all shifts of order $n$ involved in \eqref{deco} have norms at most $n+1$ (see the discussion above), but decomposition \eqref{deco} has  an exponentially decreasing factor.
\end{zamech}

\section{Proof of \eqref{deco}}
\label{PrDeco}

Above good cubes were introduced. The cube is {\bf bad} if it is not good.
 
\begin{lemma} 
\label{goodlm}
$\cP\{Q \,\text{is bad}\, \} \le C_1 2^{-c_2r}$.
\end{lemma}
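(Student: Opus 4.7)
\textbf{Plan for proving Lemma \ref{goodlm}.}

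The plan is to decompose the bad event by the scale of the offending cube $R$ and to observe that, for each fixed scale, the bad event only depends on the random shift of the grid at that scale, which is uniform modulo the scale. Fix $Q\in\cD_N$ (so $\ell(Q)=2^{-N}$) and, for each integer $k\ge r$, let $L_k=2^k\ell(Q)$ and $d_k=L_k^{1-\gamma}\ell(Q)^\gamma=2^{k(1-\gamma)}\ell(Q)$, and let
\[
A_k := \bigl\{\omega:\exists R\in\cD(\omega),\ \ell(R)=L_k,\ \dist(Q,sk(R))<d_k\bigr\}.
\]
Clearly $\{Q\text{ bad}\}\subset\bigcup_{k\ge r}A_k$, so it suffices to bound $\cP(A_k)$ by $C2^{-k\gamma}$ and sum the geometric series.

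The first step is a geometric reduction: for fixed scale $L_k$, the union $\bigcup_{R\in\cD,\,\ell(R)=L_k}sk(R)$ is precisely the grid $\cG_k(\omega)$ of all coordinate hyperplanes of step $L_k/2$ (since each $sk(R)$ contributes both $\partial R$ and the interior mid-hyperplanes cutting $R$ into its $2^d$ children). Hence $A_k$ is the event that $Q$ lies within distance $d_k$ of $\cG_k(\omega)$. The second step is to identify the randomness: by the construction in \cite{NTV2}, the grid $\cG_k(\omega)$ is a translate of the standard grid by a random shift $\tau_k(\omega)\in\R^d$, and the coordinatewise distribution of $\tau_k\pmod{L_k/2}$ is the sum of the independent level-by-level choices of fathers; one checks that this distribution is exactly uniform on $\{0,\ell(Q),2\ell(Q),\dots,(L_k/2-\ell(Q))\}^d$.

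For a single coordinate $i$, the set of shifts $\tau$ for which the segment $[a_i,a_i+\ell(Q)]$ (the projection of $Q$ on axis $i$) lies within $d_k$ of some hyperplane $\{x_i=\tau+nL_k/2\}$ is a union of intervals of total length $\ell(Q)+2d_k$ in each period $L_k/2$; its measure fraction is therefore
\[
\frac{2(\ell(Q)+2d_k)}{L_k}=2\cdot 2^{-k}+4\cdot 2^{-k\gamma}\le 6\cdot 2^{-k\gamma}.
\]
Taking a union bound over the $d$ coordinates gives $\cP(A_k)\le 6d\,2^{-k\gamma}$, and summing over $k\ge r$ yields
\[
\cP(Q\text{ is bad})\le \sum_{k\ge r}6d\,2^{-k\gamma}\le\frac{6d}{1-2^{-\gamma}}\,2^{-r\gamma}=:C_1 2^{-c_2 r},
\]
with $c_2=\gamma$. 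The only point that requires a little care is the first (geometric) step---making sure that non-ancestor cubes $R$ at scale $L_k$ do not create new constraints beyond what is encoded by the scale-$L_k/2$ hyperplane grid---but this is immediate once one observes that $\partial R\subset sk(R)$ for every $R$, so the full union of skeletons at scale $L_k$ is already the $L_k/2$-grid.
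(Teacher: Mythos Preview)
Your proof is correct and follows essentially the same approach as the paper: decompose the bad event by the scale $k\ge r$ of the larger cube, use that the position of the scale-$L_k$ grid relative to $Q$ is (essentially) uniform, and estimate the probability at each scale by the relative volume of a $2^{-k\gamma}$-neighborhood of the skeleton, then sum the geometric series. The paper compresses all of this into one sentence (``the sum of volumes of $2^{-\gamma s}$, $s\ge r$, neighborhoods of the boundary of a unit cube''), whereas you spell out the identification of $\bigcup_R sk(R)$ with the half-scale hyperplane grid and the exact distribution of the random shift; your version is a faithful detailed execution of the paper's sketch, not a different argument.
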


\begin{proof}
By the construction this probability gets estimated by the sum of volumes of $2^{-\gamma s}$, $s\ge r$, neighborhoods of the boundary of a unit cube (the reader can easily understand why the unit cube by using the  scaling invariance).
\end{proof}

\begin{lemma} 
\label{goodlm}
By a small change in probability space we can think that $\cP\{Q \,\text{is good}\, \}$ is independent of $Q$.
\end{lemma}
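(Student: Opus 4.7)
The plan is to apply a standard \emph{thinning} trick: I would enlarge the probability space by adjoining an independent uniform random variable for each dyadic cube, and then redefine goodness so that the resulting event has a common probability across all cubes. This is the same device used in the NTV papers to decouple the geometric probability of goodness from the location and scale of $Q$.

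Concretely, write $p(Q):=\cP\{Q\text{ is good}\}$. The previous lemma supplies the uniform lower bound $p(Q)\ge p_*:=1-C_12^{-c_2 r}$ for every $Q\in\cD(\omega)$. Form the product space $(\Omega\times[0,1]^{\cD},\,\cP\otimes\nu)$, where $\nu$ is a product of Lebesgue measures; this furnishes a family of mutually independent uniform variables $\eta_Q\in[0,1]$, each of them also independent of the lattice $\omega$. In the enlarged space, declare $Q$ to be \emph{good} precisely when (i) $Q$ is good in the original geometric sense, and (ii) $\eta_Q\le p_*/p(Q)$. Since (ii) is independent of (i), the redefined goodness probability is
\[
\cP\{Q\text{ is good}\}=p(Q)\cdot\frac{p_*}{p(Q)}=p_*,
\]
which is the same value for every cube. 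The badness probability becomes $1-p_*=C_12^{-c_2 r}$, so the quantitative bound of the previous lemma is preserved verbatim.

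The modification is harmless for everything that follows: the new \emph{good} family is a subset of the old one, so any geometric conclusion or Haar-coefficient estimate valid for good cubes in the original sense remains valid, and the auxiliary variables $\eta_Q$ play no role outside the probability equalization. I do not expect a real obstacle; the only technical point worth checking is to keep the $\eta_Q$ independent of $\omega$ and jointly independent across $Q$, so that when the subsequent arguments average over $\omega$ the goodness indicator factorizes cleanly and the universal constant $p_*$ can be pulled out of every expectation.
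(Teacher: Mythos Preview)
Your proposal is correct and is essentially the same thinning trick the paper uses: enlarge the probability space by independent uniform variables $\xi_Q$ and declare a cube ``really good'' only when it is geometrically good and the auxiliary variable falls in a suitable subinterval, so that the product of the two probabilities is a fixed constant. The only cosmetic difference is the target value: you normalize to $p_*=1-C_12^{-c_2 r}$, while the paper normalizes to $\tfrac12$ (using $p(Q)\ge\tfrac12$ and the interval $[0,\tfrac{1}{2p(Q)}]$); this is immaterial for the sequel.
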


This would be obvious if we would not pass from  $\Omega_0$ to $\Omega$, no change would be needed.  Otherwise, this is not quite true without the change, but take a cube $Q(\omega)$.  We already know that if $r$ is large and fixed
\begin{equation}
\label{pigood}
\cP\{Q \,\text{is good} \} \ge 1-C_12^{-c_2r}=:p_Q\ge \frac12\,.
\end{equation}
Take a random variable $\xi_{Q}(\omega^{'})$, which is equally distributed on $[0,1]$. We know that
$$
\mathbb{P}(Q \; \mbox{is good}) = p_{Q}\,.
$$
We call $Q$ ``really good'' if
$$
\xi_Q \in [0, \frac{1}{2p_Q}].
$$
Otherwise $Q$ joins bad cubes. Then
$$
\mathbb{P}(Q \; \mbox{is really good}) = \frac12,
$$
and we are done.

\begin{zamech}
We do not want to use ``really good" expression below. But in fact everywhere below when we write ``good" we mean ``really good" in the above sense. We need this only to have the probability of being good the same for all cubes.
\end{zamech}

\bigskip

Let $f, g\in L^2_0(\mu)$, having constant value on each cube from $\cD_N$.  We can write
$$
f= \sli_Q \sli_j(f, h^j_Q)h^j_Q, \;\; \; \; \; g=\sli_R\sli_i (g, h^i_R)h^i_R.
$$

First, we state and proof the theorem, that says that essential part of bilinear form of $T$ can be expressed in terms of pair of cubes, where the smallest one is good.  This is almost what has been done in \cite{NTV2}. The difference is that in \cite{NTV2} an error term (very small) appeared. To eliminate the error term we follow the idea of Hyt\"onen \cite{Hyt}. In fact, the work \cite{Hyt} improved on ``good-bad" decomposition of \cite{NTV}, \cite{NTV2}, \cite{NTV3} by replacing inequalities by an equality and getting rid of the error term.

\begin{theorem}
\label{rand1}
Let $T$ be any linear operator. Then the following equality holds:
$$
\frac12\,\cE \sli_{\stackrel{Q,R,i,j}{\ell(Q)\geqslant \ell(R)}}(Th_Q^j, h_R^i)(f, h_Q^j)(g, h_R^i) = \cE \!\!\!\!\sli_{\stackrel{Q,R,i,j}{\ell(Q)\geqslant \ell(R), \; R \; \mbox{is good}}}\!\!\!\!(Th_Q^j, h_R^i)(f, h_Q^j)(g, h_R^i).
$$
The same is true if we replace $\geqslant$ by $>$.
\end{theorem}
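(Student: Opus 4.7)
I would reduce the equality to a pointwise identity in expectation: for each scale pair $k'\le k$ and each admissible $Q\in\cD_{k'}$, $R\in\cD_k$,
$$
\cE\bigl[\mathbf{1}_{R\text{ good}}\,(Th_Q^j,h_R^i)(f,h_Q^j)(g,h_R^i)\bigr]
\;=\;\tfrac12\,\cE\bigl[(Th_Q^j,h_R^i)(f,h_Q^j)(g,h_R^i)\bigr],
$$
where ``good'' now means ``really good'' as fixed by the preceding Remark. Summing by Fubini over cubes, scales and Haar indices recovers the theorem, the factor $1/2$ being the renormalized probability $\cP\{R\text{ good}\}$.

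The main ingredient is a scale-splitting of the lattice randomness. The probability space $(\Omega,\cP)$ is generated by mutually independent scale translations $\delta_j$ ($0\le j<N$), where $\delta_j$ encodes the $2^d$-fold choice of father grid from $\cD_{j+1}$ to $\cD_j$; under this decomposition $h_Q$ is $\sigma(\delta_j:j\ge k')$-measurable, $h_R$ is $\sigma(\delta_j:j\ge k)$-measurable, and given $R$'s position the event ``$R$ is good'' is measurable with respect to $\delta_j$ with $j\le k-r$ (through the positions of the ancestors of $R$ at scales $\ge 2^r\ell(R)$). In the close-scale regime $\ell(Q)<2^r\ell(R)$, i.e.\ $k'>k-r$, the two classes of $\delta_j$ involved are literally disjoint; hence ``$R$ good'' is independent of the pair $(h_Q,h_R)$, and the renormalized marginal $\cP\{R\text{ good}\}=\tfrac12$ closes the pointwise identity.

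The subtle case is $\ell(Q)\ge 2^r\ell(R)$, where $h_Q$ and the goodness event share a dependence on the intermediate $\delta_l$'s, $k'\le l\le k-r$; this is precisely where \cite{NTV2} was forced to accept a small error term. To obtain an \emph{exact} equality I would import Hyt\"onen's device: on the enlarged probability space $\Omega\times[0,1]$ carrying the uniform variables $\xi_R$ used in the really-good definition one has
$$
\cP\{R\text{ really good}\mid \text{lattice}\} \;=\; \mathbf{1}_{R\text{ orig.\ good}}\cdot\tfrac{1}{2p_R},
$$
with $p_R=p_k$ depending only on the scale by translation invariance. This reduces the far-scale identity to the statement that original goodness of $R$ is uncorrelated with $A(Q,R):=(Th_Q^j,h_R^i)(f,h_Q^j)(g,h_R^i)$, which one verifies by conditioning on the intermediate $\delta_l$'s and invoking translation invariance at the remaining coarse scales to restore the conditional goodness probability to $p_k$. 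The step I expect to be the main obstacle is exactly this one: the intermediate $\delta_l$'s interlock $h_Q$ with the ancestor positions that decide goodness, and one must verify carefully that the Hyt\"onen averaging genuinely kills the correlation rather than only making it small --- this is the whole point of the ``equality without error term'' promised by the theorem.
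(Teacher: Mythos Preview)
Your termwise plan has a real gap in the measurability claim. With $R\in\cD_k$ and the construction of Section~1 (start from the fixed grid $\cD_N$ and make independent father choices $\delta_{N-1},\delta_{N-2},\dots$, where $\delta_j$ builds $\cD_j$ from $\cD_{j+1}$), the position of $R$ is $\sigma(\delta_{N-1},\dots,\delta_k)$--measurable, while the event $\{R\text{ is good}\}$, given $R$, is $\sigma(\delta_{k-1},\dots,\delta_0)$--measurable --- \emph{not} $\sigma(\delta_j:j\le k-r)$ as you assert. The reason is that even the first relevant ancestor, at generation $k-r$, is reached from $R$ through the intermediate choices $\delta_{k-1},\dots,\delta_{k-r}$; the very first coarsening bit $\delta_{k-1}$ already enters the goodness of $R$. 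Consequently your ``close-scale independence'' is false whenever $\ell(Q)>\ell(R)$: for $k'<k$ the function $h_Q$ depends on $\delta_{N-1},\dots,\delta_{k'}$, which shares $\delta_{k-1},\dots,\delta_{k'}$ with the goodness event. A one-dimensional computation shows that $\cP(R\text{ good}\mid\delta_{k-1})$ genuinely varies with $\delta_{k-1}$ (the parity of the relative position of $R$ inside its $2^r$-ancestor changes), so the termwise identity
\[
\cE\bigl[\mathbf{1}_{R\text{ good}}(Th_Q^j,h_R^i)(f,h_Q^j)(g,h_R^i)\bigr]=\tfrac12\,\cE\bigl[(Th_Q^j,h_R^i)(f,h_Q^j)(g,h_R^i)\bigr]
\]
fails for fixed $(Q,R)$ with $\ell(Q)>\ell(R)$. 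The auxiliary $\xi_R$ does not help: it rescales the \emph{unconditional} probability $p_R$ to $\tfrac12$, but leaves the conditional law given $\delta_{k-1},\dots,\delta_{k'}$ untouched. So both your ``easy'' close-scale case and your ``hard'' far-scale case are in fact hard, and the sketch does not close either one.

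The paper avoids the entire difficulty by a subtraction that never analyses the range $\ell(Q)\ge\ell(R)$ term by term. First it proves the identity summed over \emph{all} pairs: writing $g_{\text{good}}:=\sum_{R\text{ good}}(g,h_R^i)h_R^i$ one has $\cE\,(Tf,g_{\text{good}})=(Tf,\cE\,g_{\text{good}})=\tfrac12(Tf,g)$, because goodness of $R$ is independent of the position of $R$ itself (disjoint $\delta$'s: $\{\delta_j:j<k\}$ versus $\{\delta_j:j\ge k\}$). Second, on the complementary range $\ell(Q)<\ell(R)$ the cube $Q$ is \emph{finer} than $R$, so $h_Q$ is $\sigma(\delta_{N-1},\dots,\delta_{k'})$--measurable with $k'>k$, genuinely disjoint from $\sigma(\delta_{k-1},\dots,\delta_0)$; here your independence reasoning is correct and the termwise identity does hold. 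Subtracting the second equality from the first gives the theorem on $\ell(Q)\ge\ell(R)$ exactly, with no error term and without ever confronting the dependence you flagged as the main obstacle.
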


\begin{proof}
We denote
$$
\sigma(T) = \sli_{\ell(Q)\geqslant \ell(R)}(Th^j_Q, h^i_R)(f, h^j_Q)(g, h^i_R).
$$
$$
\sigma'(T)= \sli_{\substack{\ell(Q)\geqslant \ell(R)\\ R \; \mbox{is good}}}(Th^j_Q, h^i_R)(f, h^j_Q)(g, h^i_R).
$$
We would like to get a relationship between $\cE\, \sigma(T)$ and $\cE\,\sigma'(T)$.
We fix $R$ and put 
$$
g_{good}:= \sli_{R \; \mbox{is good}} (g,h^i_R)h^i_R\,.
$$
Then
$$
\sli_{Q} \sli_{R \; \mbox{is good}}(Th^j_{Q}, h^i_R)(f, h^j_Q)(g,h^i_R) = \left(T(f),\sli_{R \; \mbox{is good}} (g,h^i_R)h^i_R\right) =\left(T(f), g_{good}\right)\,.
$$
Taking expectations, we obtain

\begin{equation}
\label{allRgood}
\begin{aligned}
&\cE \sli_{Q,R} (Th^j_{Q}, h^i_R)(f, h^j_Q)(g,h^i_R)\mathbf{1}_{R \,\mbox{is good}} =
 \cE (T(f), g_{good})=\\
 & (T(f),\cE\, g_{good})=\frac12\,(T(f),g)=\frac12\,\cE \sli_{Q,R} (Th^j_{Q}, h^i_R)(f, h^j_Q)(g,h^i_R).
 \end{aligned}
\end{equation}
Next, suppose $\ell(Q)<\ell(R)$. Then the goodness of $R$ does not depend on $Q$, and so
$$
 \frac12\, (Th^j_Q, h^i_R)(f, h^j_Q)(g, h^i_R)=\cE \left((Th^j_Q, h^i_R)(f, h^j_Q)(g, h^i_R)\mathbf{1}_{R \,\mbox{is good}}|Q,R\right) \,.
$$
Let us explain this equality. The right hand side is conditioned: meaning that the left hand side involves the fraction of two numbers: 1) the number of all lattices containing $Q, R$ in it  and such that $R$ (the one that is  larger  by size) is good and 2)  the number of lattices containing $Q, R$ in it. This fraction is exactly $\pi_{good}=\frac12$. The equality has been explained.

 Now we fix a pair of $Q, R$, $\ell(Q)<\ell(R)$, and multiply both sides by the probability that this pair is in the same dyadic lattice from our family. This probability is just the ratio of the number of dyadic lattices in our family containing elements $Q$ and $R$  to the number of all dyadic lattices in our family. After multiplication by this ratio and the summation of all terms with $\ell(Q)<\ell(R)$ we get
finally,
\begin{equation}
\label{QmRgood}
 \frac12\,\cE \sli_{\ell(Q)<\ell(R)} (Th^j_Q, h^i_R)(f, h^j_Q)(g, h^i_R)=\cE \sli_{\ell(Q)<\ell(R)} (Th^j_Q, h^i_R)(f, h^j_Q)(g, h^i_R)\mathbf{1}_{R \,\mbox{is good}} \,.
 \end{equation}
 
 \medskip
 
Now we use first \eqref{allRgood} and then \eqref{QmRgood}:
\begin{equation*}
\begin{aligned}
&\frac12\,\cE \sli_{Q,R} (Th^j_{Q}, h^i_R)(f, h^j_Q)(g,h^i_R) = \cE \sli_{Q,R} (Th^j_{Q}, h^i_R)(f, h^j_Q)(g,h^i_R)\mathbf{1}_{R \,\mbox{is good}} =
 \\
&\cE \sli_{\ell(Q)<\ell(R)}(Th^j_{Q}, h^i_R)(f, h^j_Q)(g,h^i_R)\mathbf{1}_{R \,\mbox{is good}} +
 \cE\sli_{\ell(Q)\geqslant\ell(R)}(Th^j_{Q}, h^i_R)(f, h^j_Q)(g,h^i_R)\mathbf{1}_{R \,\mbox{is good}} =
  \\
 &\frac12\,\cE \sli_{\ell(Q)<\ell(R)}(Th^j_{Q}, h^i_R)(f, h^j_Q)(g,h^i_R) +
\cE\sli_{\ell(Q)\geqslant\ell(R), R\;\mbox{is good}}(Th^j_{Q}, h^i_R)(f, h^j_Q)(g,h^i_R),
\end{aligned}
\end{equation*}
and therefore
\begin{equation}
\label{QbRgood}
\cE\sli_{\ell(Q)\geqslant\ell(R),\; R\;\mbox{is good}}(Th^j_{Q}, h^i_R)(f, h^j_Q)(g,h^i_R) = \frac12\,\cE\sli_{\ell(Q)\geqslant\ell(R)}(Th^j_{Q}, h^i_R)(f, h^j_Q)(g,h^i_R)\,,
\end{equation}
which is the statement of our Theorem.
\end{proof}

\bigskip

Now we skip $i, j$ for the sake of brevity. We have just reduced the estimate of the bilinear form $\sum_{Q, R\in\cD} (Th_Q, h_R)(f, h_Q)(g,h_R)$ to the estimate over {\it all} dyadic lattices in our family, but summing over pairs $Q,R$, where the smaller in size is always {\it good}: $\cE\,\sum_{Q, R\in\cD, \,\text{smaller is good}} (Th_Q, h_R)(f, h_Q)$. Split it to two ``triangular" sums:    $\cE\,\sum_{Q, R\in\cD, \,\ell(R)< \ell(Q),\,R\text{ is good}} (Th_Q, h_R)(f, h_Q)(g, h_R)$ and
$$
\cE\,\sum_{Q, R\in\cD, \,\ell(Q)\le \ell(R),\,Q\text{ is good}} (Th_Q, h_R)(f, h_Q)(g, h_R)\,.
$$
They are basically symmetric, so we will work only with the second sum.

First consider $\sigma_0:=\cE\,\sum_{Q\in\cD, \,\,Q\text{ is good}} (Th_Q, h_Q)(f, h_Q)(g, h_Q)$. We do not care where  $Q$ is good or not and estimate the coefficient $(Th_Q, h_Q)$ in the most simple way. Recall that $h_Q=\sum_{j=1}^{2^d} c_{Q,j} 1_{Q_j}$, where $Q_j$ are children of $Q$. We also remember that $|c_{Q,j}|\le 1/\sqrt{\mu(Q_j)}$.
Estimating
$$
|c_{Q,j}||c_{Q,j'}| |(T1_{Q_j}, 1_{Q_{j'}})| \le  1/\sqrt{\mu(Q_j)}  1/\sqrt{\mu(Q_j)} C_0^2sqrt{\mu(Q_j)} \sqrt{\mu(Q_j)} \le C_0^2
$$
by \eqref{t1}, we can conclude that $\sigma_0/C_0^2$  is actually splits to at most $4^d$ shifts of order $0$.

Similarly we can can work with $\sigma_s=\cE\,\sum_{Q, R\in\cD, \, Q\subset R,\,\ell(Q)= 2^{-s} \ell(R),\,Q\text{ is good}} (Th_Q, h_R)(f, h_Q)(g, h_R)$ for $s=1,\dots, r-1$. We need $r$ to be large, but not too much, it depends on $d$ only, and is chosen in \eqref{pigood}.

\subsection{Decomposition of the inner sum.}
Now we start to work with $\sigma_s, s\ge r$.  Fix a pair $Q,R$, and let $R_1$ be a descendant of $R$ such that $\ell(R_1)=2^r \ell(Q)$.
Consider  the son $R_2$ of $R_1$ that contains $Q$. We know that $Q$ is good, in particular,
$$
\dist (Q, \pd R_2)\ge \dist (Q, sk(R_1)) \ge \ell(R_1)^{1-\gamma} \ell(Q)^\gamma =\ell(R_1) 2^{-r\gamma}= 2\cdot 2^{-r\gamma}\ell(R_2)\,.
$$
Number $\gamma$ will be a small one so 
\begin{equation}
\label{Qfar}
\dist (Q, \pd R_2)\ge 2^{-r} \ell(R_1)=\ell(Q)\,.
\end{equation}
We want to estimate $(Th_Q, h_R)$. 

\begin{lemma}
\label{tief}
Let $Q\subset R$, $S(R)$ be the son of $R$ containing $Q$, and let  $\dist (Q, \pd S(R))\ge \ell(Q)$. Let $T$ be a \cz operator with parameter $\eps$ in \eqref{3}. Then
$$
(Th_Q, h_R) = \av{h_R}{S(R)} ( h_Q, \Delta_Q T^*1)_{\mu} +t_{Q,R}\,,
$$
where
$$
|t_{Q,R}| \le  \int_Q\int_{R\setminus S(R)} \frac{\ell(Q)^\eps}{\dist(t, Q) +
\ell(Q))^{m+\eps}}|h_Q(s)||h_R(t)|\,d\mu(s)|\,d\mu(t)
$$
$$
 \int_Q\int_{\R^d\setminus S(R)} \frac{\ell(Q)^\eps}{\dist(t, Q) +
\ell(Q))^{m+\eps}}|h_Q(s)||\av{h_R}{S(R),\mu}|\,d\mu(s)|\,d\mu(t)\,.
$$
\end{lemma}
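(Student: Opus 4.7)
The plan is to exploit the fact that $h_R$ is constant on each child of $R$---in particular on $S(R)$---to split $(Th_Q, h_R)_\mu$ into a ``near'' piece supported on $S(R)$ (producing the main term) and a ``far'' piece supported on $R\setminus S(R)$ (absorbed into $t_{Q,R}$). Writing $h_R = \av{h_R}{S(R)}\cdot 1_{S(R)} + h_R\cdot 1_{R\setminus S(R)}$ one gets
\[
(Th_Q, h_R)_\mu = \av{h_R}{S(R)}\,(h_Q, T^*1_{S(R)})_\mu + \int_{R\setminus S(R)} Th_Q(t)\,h_R(t)\,d\mu(t).
\]

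To turn the first summand into $\av{h_R}{S(R)}(h_Q, \Delta_Q T^*1)_\mu$, I decompose $T^*1 = T^*1_{S(R)} + T^*1_{\R^d\setminus S(R)}$, interpreting this rigorously by truncating $1$ to $1_{B(0,N)}$ and sending $N\to\infty$; the pairing $(h_Q, T^*1_{B(0,N)})_\mu$ converges absolutely thanks to the mean-zero property of $h_Q$ together with \eqref{tstar1}. Because $h_Q$ lies in the range of $\Delta_Q$, we have $(h_Q, T^*1)_\mu = (h_Q, \Delta_Q T^*1)_\mu$, so
\[
(h_Q, T^*1_{S(R)})_\mu = (h_Q, \Delta_Q T^*1)_\mu - (h_Q, T^*1_{\R^d\setminus S(R)})_\mu,
\]
yielding the asserted main term and identifying $t_{Q,R}$ as the sum of $\int_{R\setminus S(R)} Th_Q\cdot h_R\,d\mu$ and $-\av{h_R}{S(R)}(h_Q, T^*1_{\R^d\setminus S(R)})_\mu$.

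Both error pieces are then bounded via CZ kernel regularity. The hypothesis $\dist(Q,\pd S(R))\ge \ell(Q)$---which in fact holds with a large constant in front of $\ell(Q)$ courtesy of the goodness condition---implies that for $s\in Q$ and $t\in\R^d\setminus S(R)$ the separation $|t-s|$ exceeds $|s-s_Q|$ (with $s_Q$ the center of $Q$) by the factor required in the second Hölder estimate of Definition~\ref{3}, and $|t-s_Q|\approx \dist(t,Q)+\ell(Q)$. Using $\int h_Q\,d\mu = 0$, I rewrite $Th_Q(t) = \int_Q [K(t,s)-K(t,s_Q)]h_Q(s)\,d\mu(s)$ and apply
\[
|K(t,s)-K(t,s_Q)| \lesssim \frac{\ell(Q)^\eps}{(\dist(t,Q)+\ell(Q))^{m+\eps}}.
\]
Plugging this bound into the two error integrals, with weights $|h_R(t)|$ and $|\av{h_R}{S(R)}|$ respectively, reproduces the two estimates in the statement.

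The only subtlety is giving rigorous meaning to $T^*1$ and to the splitting $T^*1 = T^*1_{S(R)} + T^*1_{\R^d\setminus S(R)}$; this is the main (though minor) obstacle, and it is resolved by the truncation argument above, with absolute convergence guaranteed by the mean-zero property of $h_Q$, the $\eps>0$ decay of the kernel bound, and hypothesis \eqref{tstar1}. Everything else is routine CZ kernel bookkeeping.
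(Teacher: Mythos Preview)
Your argument is correct and is essentially the same as the paper's: the paper performs the identical decomposition by writing $h_R=\av{h_R}{S(R),\mu}\cdot 1 + h_R\cdot 1_{R\setminus S(R)} - \av{h_R}{S(R),\mu}(1-1_{S(R)})$ in one algebraic step (which is exactly your two-step splitting of first $h_R$ and then $T^*1$), and then applies the same mean-zero replacement $Th_Q(t)=\int_Q[K(t,s)-K(t,c_Q)]h_Q(s)\,d\mu(s)$ together with the \cz smoothness bound. Your additional remark on interpreting $T^*1$ via truncation is a welcome clarification that the paper leaves implicit.
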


\begin{proof}
We write $h_R= h_R\cdot 1_{R\setminus S(R)} + \av{h_R}{S(R), \mu} 1_{S(R)}$.  Then we continue by
$$
h_R=\av{h_R}{S(R), \mu} 1+ h_R\cdot 1_{R\setminus S(R)} - \av{h_R}{S(R), \mu} (1-1_{S(R)})\,.
$$
Then (denoting by $c_Q$ the center of $Q$) we write
$$
(Th_Q, h_R) = \av{h_R}{S(R)} ( h_Q, \Delta_Q T^*1)_{\mu} +\int_{R\setminus S(R)}\int_Q [K(x,y)-K(x, c_Q)] h_Q(y) h_R(x)\,d\mu(y)\,d\mu(x) -
$$
$$
\int_{\R^d\setminus S(R)}\int_Q [K(x,y)-K(x, c_Q)] h_Q(y) \av{h_R}{S(R), \mu}\,d\mu(y)\,d\mu(x)\,.
$$
Now the usual \cz estimate of the kernel finishes the lemma. In this estimate we used \eqref{Qfar}.
\end{proof}

After proving this lemma let us consider two integral terms above separately $t_1:=t_{1,Q,R}:= \int_{R\setminus S(R)}\dots$ and
$t_2:=t_{2,Q,R}:= \int_{\R^d\setminus R}\dots$.

In the second integral we estimate $h_Q$ in $L^1(\mu)$: $\|h_Q\|_{1,\mu} \le \sqrt{\mu(Q)}$, and we estimate
$h_R$ in $L^\infty(\mu)$: $\|h_R\|_\infty \le 1/\sqrt{\mu(S(R))}$.   Integral itself is at most  (recall that $\mu(B(x,r)\le r^m)$)
\begin{equation}
\label{int}
\int_{\R^d\setminus  S(R)}\frac{\ell(Q)^\eps}{(\dist(t, Q) +\ell(Q))^{m+\eps}}\,d\mu(t) \le \frac{\ell(Q)^\eps}{\dist(Q, sk(R))^\eps}\,.
\end{equation}

So if $Q$ is good, meaning that $\dist(Q, sk(R)) \ge \ell(R)^{1-\gamma}\ell(Q)^\gamma$ then \eqref{int} gives us
\begin{equation}
\label{t2qr}
|t_{2, Q, R}| \le \Big(\frac{\mu(Q)}{\mu(S(R))}\Big)^{1/2} \frac{\ell(Q)^{1-\eps\gamma}}{\ell(R)^{1-\eps\gamma}}\,.
\end{equation}

In the first integral we estimate $h_Q$ in $L^1(\mu)$: $\|h_Q\|_{1,\mu} \le \sqrt{\mu(Q)}$, and we {\it cannot} estimate
$h_R$ in $L^\infty(\mu)$: $\|h_R\|_{L^\infty(R\setminus S(R)} $.   The problem is that this supremum is bounded by $1/\sqrt{\mu(s(R))}$ 
for a sibling $s(R)$ of $S(R)$. But because doubling is missing this can 
be an uncontrollably bad estimate. The term $ \Big(\frac{\mu(Q)}{\mu(R)}\Big)^{1/2}$ is a good term , 
at least it is bounded by $1$, on the other hand the term $ \Big(\frac{\mu(Q)}{\mu(R)}\Big)^{1/2}$ 
is not bounded by anything, it is uncontrollable.  
Therefore, we estimate here $\|h_R\|_{1, \mu} \le \sqrt{\mu(R)}$.  
Integral itself we are forced to estimate in $L^\infty$ as $L^1(\mu)$ has been just spent. 
So we get the term $\frac{\ell(Q)^\eps}{\dist(Q, sk(R))^{m+\eps}}$.

So if $Q$ is good, meaning that $\dist(Q, sk(R)) \ge \ell(R)^{1-\gamma}\ell(Q)^\gamma$ then \eqref{int} gives us
$$
|t_{1, Q, R}| \le (\mu(Q)\mu(R))^{1/2} \frac{\ell(Q)^\eps}{\ell(R)^{m+\eps-(m+\eps)\gamma}\ell(Q)^{(m+\eps)\gamma}}\,.
$$
Choose $\gamma:= \frac{\eps}{2(m+\eps)}$. Then we get 
\begin{equation}
\label{t1qr}
|t_{1, Q, R}| \le \Big(\frac{\ell(Q)}{\ell(R)}\Big)^{\eps/2}\frac{\sqrt{\mu(Q)}\sqrt{\mu(R)}}{\ell(R)^m}\le \Big(\frac{\mu(Q)}{\mu(R)}\Big)^{1/2} \Big(\frac{\ell(Q)}{\ell(R)}\Big)^{\eps/2}\,.
\end{equation}
We again used that $\mu(B(x,r)\le r^m)$ in the last inequality. Compare now \eqref{t2qr} and \eqref{t1qr}. We see that for small $\gamma$ (and our $\gamma$ is small) $\eps/2 \le 1-\gamma\eps$, and we can conclude that estimate \eqref{t1qr} holds for both terms $t_{1,Q,R}, t_{2, Q, R}$.

Now notice that sums $\sum_{Q\subset R,\, \ell(Q)=2^{-r-k}\ell(R),\, Q\, \text{is good}} (Th_Q, h_R) (f, h_Q), (g, h_R)$, $k\ge 0$, can be written as three sums:
$$
\sum_R \sum_{Q\subset R,\, \ell(Q)=2^{-r-k}\ell(R),\, Q\, \text{is good}} t_{1,Q,R} (f, h_Q), (g, h_R)\,,
$$
$$
\sum_R\sum_{Q\subset R,\, \ell(Q)=2^{-r-k}\ell(R),\, Q\, \text{is good}} t_{2, Q, R} (f, h_Q), (g, h_R)\,,
$$
and
$$
\sum_R\sum_{Q\subset R,\, \ell(Q)=2^{-r-k}\ell(R),\, Q\, \text{is good}} \av{h_R}{S(R)} ( h_Q, \Delta_Q T^*1)_{\mu} (f, h_Q) (g, h_R)\,.
$$

Obviously, the first sum is the bilinear form of a shift of complexity $(0, r+k)$ having the coefficient $2^{-\frac{\eps(r+k)}{2}}$ in front, just look at \eqref{t1qr}. The second sum is also the bilinear form of a shift of complexity $(0, r+k)$ having the coefficient $2^{-\frac{(1-\eps\gamma)(r+k)}{2}}$ in front. We just look at \eqref{t2qr} and notice that $\sum_{Q\subset S(R)} \Big(\big(\mu(Q)/\mu(S(R)\big)^{1/2} \Big)^2\le 1$.

This is exactly what we need, and the part of $\cE \sum_{\ell(Q)\le \ell(R),\, Q\,\text{is good}}\dots$, which is given by $Q\subset R$
is represented as the sum of shifts of complexity $(0, n)$, $n\ge r$, 
with exponential  coefficients of the form 
$2^{-\delta n}$, $n>0$. However, this is done  up to the third sum.

We cannot  take care of the third sum  individually. Instead we sum the third sums in all $k\ge 0$ and all $h_Q^i, i=1,\dots, 2^d-1$, $h_R^j, j=1,\dots, 2^d-1$  (we recall that the index $i$ was omitted, now we remember it). After summing over $k$ and $i, j$ we get  ($F(L)$ denotes the dyadic father of $L$)
$$
\sum_{i=1}^{2^d-1}\sum_{L\in \cD} (g, h_{F(L)}^i) \av{h_{F(L)}^i}{L, \mu}  \sum_{Q\subset L,\, \ell(Q)\le 2^{-(r-1)}\ell(L),\, Q\,\text{is good}}(\Delta_Q f, \Delta_Q (T^*1))\,.
$$
We introduce the following operator:
$$
\pi (g) :=\sum_{i=1}^{2^d-1}\sum_{L\in \cD} (g, h_{F(L)}^i) \av{h_{F(L)}^i}{L, \mu}  \sum_{Q\subset L,\, \ell(Q)\le 2^{-(r-1)}\ell(L),\, Q\,\text{is good}}\Delta_Q (T^*1)\,.
$$
This is the same as
$$
\pi(g) = \sum_{L\in\cD} \av{\Delta_{F(L)}g}{L,\mu} \sum_{Q\subset L,\, \ell(Q)\le 2^{-(r-1)}\ell(L),\, Q\,\text{is good}}\Delta_Q (T^*1)\,.
$$

We can rewrite this formula by summing  telescopically first over $L\in \cD$ such that $\ell(L)\ge 2^{(r-1)}\ell(Q)$. Then we get (we assume $\int f\, d\mu=0$ for simplicity)
$$
\pi(g)=\sum_{R\in \cD} \av{f}{R}  \sum_{Q\subset R,\, \ell(Q)= 2^{-(r-1)}\ell(R),\, Q\,\text{is good}}\Delta_Q (T^*1)\,.
$$

We check now by inspection the following equality
\begin{equation}
\label{piTstar}
\begin{aligned}
\sum_{i,j=1}^{2^d-1}\sum_{k\ge 0}\sum_{Q\subset R,\, \ell(Q)=2^{-r-k}\ell(R),\, Q\, \text{is good}} &\av{h_R^i}{S(R)} ( h_Q^i, \Delta_Q T^*1)_{\mu} (f, h_Q^i) (g, h_R^j) = 
\\
&(f, \pi(g)) =(\pi^* f, g)\,.
\end{aligned}
\end{equation}

Operator $\pi(g)$ is a $(0, r-1)$ generalized dyadic shift, it is just 
$$
\pi(g)=\sum_{J=1}^{2^d-1}\sum_{R\in \cD} \av{f}{R}  \sum_{Q\subset R,\, \ell(Q)= 2^{-(r-1)}\ell(R),\, Q\,\text{is good}}(h_Q^j,T^*1) h_Q^j
\,.
$$
To see that we need just to check the carleson condition. We fix $L\in \cD$ and we can see by \eqref{Qfar} that the estimate
\begin{equation}
\label{carl1}
 \sum_{Q\subset L,\, \ell(Q)\le 2^{-(r-1)}\ell(L),\, Q\cap \pd L=\emptyset}\|\Delta_Q(T^*1)\|_{\mu}^2 \le C\mu(L)\,.
\end{equation}
 is enough.
 
 To prove \eqref{carl1} we need
 
 \begin{lemma}
 \label{Sprime}
 Let $T$ be \cz operator satisfying \eqref{tstar1}. Let $S$ be any dyadic square, and let $S' =1.1S$. Then
 $$
 \sum_{Q\subset S}\|\Delta_Q T^*1\|_{\mu}^2 \le C_1\mu(S')\,,
 $$
 where $C_1=C(C_0, d, \eps)$ with $C_0$ from \eqref{tstar1}.
 \end{lemma}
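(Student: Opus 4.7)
The plan is to split $T^*\1 = T^*\1_{S'} + T^*\1_{(S')^c}$, where the second summand is defined only modulo an additive constant (which does not affect any Haar coefficient $\Delta_Q T^*\1$). By the triangle inequality it suffices to estimate each of the sums $\sum_{Q\subset S}\|\Delta_Q T^*\1_{S'}\|_\mu^2$ and $\sum_{Q\subset S}\|\Delta_Q T^*\1_{(S')^c}\|_\mu^2$ separately by a constant multiple of $\mu(S')$.

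The local part is immediate from Bessel's inequality for the Haar system in $L^2(\mu)$: one has $\sum_{Q\subset S}\|\Delta_Q T^*\1_{S'}\|_\mu^2\le \|T^*\1_{S'}\|_\mu^2$. Since $S'=1.1S$ is itself an axis-parallel cube, the testing hypothesis \eqref{tstar1} applied to the cube $S'$ gives $\|T^*\1_{S'}\|_\mu^2\le C_0\,\mu(S')$, which is exactly the required bound. This is precisely where the enlargement $S'$ in the statement is needed: in the non-doubling setting one cannot hope to replace $\mu(S')$ by $\mu(S)$ in the local part, since we must test on a genuine cube containing $S$.

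For the far part, set $\phi:=T^*\1_{(S')^c}$. Since the Haar functions attached to cubes $Q\subset S$, together with $\1_S/\sqrt{\mu(S)}$, form an orthonormal basis of $L^2(S,d\mu)$, one obtains the identity
\[
\sum_{Q\subset S}\|\Delta_Q \phi\|_\mu^2=\|\1_S(\phi-\av{\phi}{S,\mu})\|_\mu^2\le \mu(S)\,\esssup_{x\in S}|\phi(x)-\phi(c_S)|^2,
\]
where in the last step I used that $\av{\phi}{S,\mu}$ minimises the $L^2(\mu)$-distance on $S$ to constants. Hence the matter reduces to the pointwise oscillation estimate $|\phi(x)-\phi(c_S)|\le C(d,\varepsilon)$ for $x\in S$, after which $\mu(S)\le\mu(S')$ closes the argument. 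This oscillation estimate is the main technical step.

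I would derive it from the integral representation
\[
\phi(x)-\phi(c_S)=\int_{(S')^c}\bigl[K(y,x)-K(y,c_S)\bigr]\,d\mu(y)
\]
by splitting the domain into a distant region $|y-c_S|\ge 2\ell(S)$ and an intermediate annulus $0.55\,\ell(S)\le |y-c_S|\le 2\ell(S)$. On the distant region $|y-c_S|\ge 2|x-c_S|$, so the H\"older smoothness of $K$ from Definition \ref{3} gives $|K(y,x)-K(y,c_S)|\le C|x-c_S|^\varepsilon/|y-c_S|^{m+\varepsilon}$; a dyadic annular decomposition together with the growth condition $\mu(B(c_S,r))\le r^m$ sums to a geometric series of size $O(1)$. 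On the intermediate annulus $|y-x|\ge 0.05\,\ell(S)$ for every $x\in S$, so both $|K(y,x)|$ and $|K(y,c_S)|$ are bounded by $C(0.05\,\ell(S))^{-m}$, while the $\mu$-measure of that annulus is at most $\mu(2S)\le C\ell(S)^m$ by the growth hypothesis, again giving an $O(1)$ bound. All implicit constants depend only on $d$ and $\varepsilon$, so the lemma follows with $C_1=C(C_0,d,\varepsilon)$.
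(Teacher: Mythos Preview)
Your argument is correct and uses the same local/far splitting $T^*\1=T^*\1_{S'}+T^*\1_{(S')^c}$ as the paper, with an identical treatment of the local part via Bessel and \eqref{tstar1}. The far part, however, is handled differently. The paper estimates each individual term by duality: for $f$ with $\|f\|_\mu=1$ it writes $(f,\Delta_Q T^*(1-\1_{S'}))=\int_{(S')^c}\int_Q[K(x,y)-K(x,c_Q)]\Delta_Qf(y)\,d\mu(y)\,d\mu(x)$ and, using $\|\Delta_Qf\|_{1,\mu}\le\sqrt{\mu(Q)}$ together with the annular estimate \eqref{int}, obtains the pointwise-in-$Q$ bound $\|\Delta_Q T^*(1-\1_{S'})\|_\mu^2\le C\mu(Q)\bigl(\ell(Q)/\ell(S)\bigr)^{2\eps}$, which then sums over generations to $C\mu(S)$. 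You instead collapse the whole far sum at once via $\sum_{Q\subset S}\|\Delta_Q\phi\|_\mu^2=\|\1_S(\phi-\av{\phi}{S,\mu})\|_\mu^2$ and control the right-hand side by a single pointwise oscillation estimate for $\phi$ on $S$, centered at $c_S$ rather than at the individual $c_Q$. Both routes rely on kernel smoothness plus the growth condition $\mu(B(x,r))\le r^m$; the paper's version yields the extra quantitative decay $\bigl(\ell(Q)/\ell(S)\bigr)^{2\eps}$ per cube (useful if one wants finer Carleson-type information), while your version is a touch more direct for the stated lemma and avoids the duality step.
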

 
 \begin{proof}
 We write $\Delta_Q T^*1 = \Delta_Q T^*1_{S'} + \Delta_Q T^*(1-1_{S'})$. The first term is easy:
 $$
 \sum_{Q\subset S}\|\Delta_Q T^*1_{S'}\|_{\mu}^2 \le \|T^*1_{S'}\|_{\mu}^2 \le C_0\mu(S')
 $$
just by \eqref{tstar1}. 

To estimate $ \sum_{Q\subset S}\|\Delta_Q T^*(1-1_{S'})\|_{\mu}^2$ we  fix $f\in L^2(\mu), \|f\|_{\mu}=1$, we write
$$
(f, \Delta_Q T^*(1-1_{S'})= (T\Delta_Q f, 1-1_{S'}) =\int_{\R^d\setminus S'}\int_Q \Big[ K(x, y)-K(x, c_Q)] \Delta_Q f(y)\,d\mu(y)\, d\mu(x)\,.
$$
Such integral we already saw in \eqref{int} because we can estimate it by $\|\Delta_Q f\|_{1,\mu} \le \|\Delta_Q f\|_{1,\mu}\sqrt{\mu(Q)}\le \sqrt{\mu(Q)}$ multiplied by $\int_{\R^d\setminus S'}\frac{\ell(Q)^\eps}{(\dist (x, Q) +\ell(Q))^{m+\eps}}\, d\mu(x)\le C\ell(Q)^\eps/\ell(S)^\eps$.
As $f$ was arbitrary $f\in L^2(\mu), \|f\|_{\mu}=1$, we get that 
$$
\|\Delta_Q T^*(1-1_{S'})\|_{\mu}^2 \le C\mu(Q)\Big(\frac{\ell(Q)}{\ell(S)}\Big)^{2\eps}\,.
$$
Taking the sum over $Q$ we obtain
 $$
 \sum_{Q\subset S}\|\Delta_Q T^*(1-1_{S'})\|_{\mu}^2 \le \|T^*1_{S'}\|_{\mu}^2 \le C_0\mu(S')\,.
 $$
Lemma is proved.
\end{proof}
We proved \eqref{carl1}, but it also proves that $\pi$ is a bounded generalized shift.
 
\subsection{The decomposition of the outer sum}
We are left to decompose 
$$
\cE \sum_{Q\cap R=\emptyset, \,\ell(Q)\le \ell(R),\, Q\,\text{is good}} (Th_Q, h_R)(f, h_Q, g, h_R)
$$
 into the bilinear form of $(s,t)$-shifts with exponentially small in $\max(s,t)$ coefficients.

Denote
$$
D(Q,R):=
\ell(Q) +\dist (Q, R) +\ell(R)\,.
$$
Also let $L(Q,R)$ be a dyadic interval from the same lattice such that $\ell(L(Q,R) \in 2D(Q,R), 4D(Q,R))$ that contains $R$.

Exactly as we did this before we can estimate
$$
(Th_Q, h_R)=\int_R\int_Q [K(x,y) - K(x, c_Q)] h_Q(y)h_R(x)\,d\mu(y)\,d\mu(x)
$$
by estimating $\|h_Q\|_{1, \mu}\le \sqrt{\mu(Q)}, \|h_R\|_{1, \mu}\le \sqrt{\mu(R)}$, and $\frac{\ell(Q)\eps}{\dist(Q, R)^{m+\eps}}\le \ell(Q)^{\eps/2}/\ell(R)^{\eps/2+m}$ if $\dist (Q, R)\le \ell(R)$. Otherwise the estimate is $\ell(Q)^\eps/ D(Q, R)^{m+\eps}$. These two estimates are both united into the following one obviously
\begin{equation}
\label{long}
|(Th_Q, h_R)|\le C\Big(\frac{\ell(Q)}{\ell(R)}\Big)^{\eps/2} \frac{\ell(R)^{\eps/2}}{D(Q,R)^{m+\eps/2}}\sqrt{\mu(Q)}\sqrt{\mu(R)}\,.
\end{equation}

Of course in this estimate we used not only that $Q$ is good, but also that $\ell(Q)\le 2^{-r}\ell(R)$. Only having this latter condition we can apply the estimate on $\dist(Q, R)$ from Definition \ref{1} that was  used in getting \eqref{long}.

However, if $\ell(Q)\in [2^{-r-1}\ell(R), \ell(R)$ we use just a trivial estimate of coefficient
$(Th_Q, h_R)|$, namely
\begin{equation}
\label{triv}
|(Th_Q, h_R)|\le C(C_0, d)\,,
\end{equation}
where $C_0$ is from \eqref{t1}. This is not dangerous at all because such pairs $Q,R$ will be able to form below only  shifts of complexity $(s,t)$, where $0\le s \le t\le r$; the number of such shifts is at most $\frac{r(r+1)}{2}$, and let us recall, that $r$ is not a large number, it depends only on $d$ (see \eqref{pigood}).

Now in a given $\cD\in \Omega$ a pair of $Q,R$ may  or may not be inside $L(Q,R)$ ($R\subset L(Q,R)$ by definition). But the ration of nice lattices (these are those when both $Q,R$ are inside $L(Q,R)$)  with respect to all lattices in which both $Q, R$ are present is bounded away from zero, this ration (probability) satisfies
\begin{equation}
\label{pqr}
p(Q,R) \ge P_d>0\,.
\end{equation}

We want to modify the following expectation
$$
\Sigma:=\cE \sum_{Q\cap R=\emptyset, \,\ell(Q)\le \ell(R),\, Q\,\text{is good}} (Th_Q, h_R)(f, h_Q, g, h_R)\,.
$$
This expectation is really a certain sum itself, namely the sum over all lattices in $\Omega$ divided by $\sharp(\Omega)=: M$. Each time $Q,R$ are not in a nice lattice we put zero in front of corresponding term. This changes very much the sum. However we can make up for that, and we can {\it leave the sum unchanged} if for nice lattices we put the coefficient $1/p(Q,R)$ in front of corresponding terms (and keep $0$ otherwise).

Then
$$
\frac{\text{number of lattices containing}\, Q, R}{M} = \frac1{p(Q,R)}\frac{\text{number of nice lattices containing}\, Q, R}{M}\,.
$$
Notice that the original sum $\Sigma$  terms $Q,R$ multiplied by the LHS.  The modified sum will contain the same terms multiplied by the $RHS$.  So it is not modified at all, it is the same sum exactly! We can write it again as
$$
\cE \sum_{Q\cap R=\emptyset, \,\ell(Q)\le \ell(R),\, Q\,\text{is good}} m(Q,R, \omega)(Th_Q, h_R)(f, h_Q, g, h_R)\,,
$$
where the random coefficients $m(Q,R, omega)$ are either $0$ (if the lattice $\cD=\omega$ is not nice), or $1/p(Q, R)$ if the lattice is nice.

Now let us fix two  positive integers $s\ge t$, fix a lattice, and consider this latter sum only for this lattice, and write it as 
$$
\sum_s\sum_t\sum_{L}\sum_{Q\subset L, R\subset L, \ell(Q)=2^{-t}\ell(L), \ell(R)=2^{-s}\ell(L)} m(P,Q)(Th_Q, h_R) (f, h_Q)(g, h_R)=:\sum_s\sum_t \sigma_{s,t}\,.
$$

Each $\sigma_{s,t}$ is a dyadic shift of complexity $(s,t)$. In fact, use \eqref{pqr} and \eqref{long} and easily see that the sum of squares of coefficients inside each $L$ is bounded (we use again $\mu(B(x, r)\le r^m$). Moreover, the terms $\Big(\frac{\ell(Q)}{\ell(R)}\Big)^{\eps/2}$, $\Big(\frac{\ell(R)}{D(Q,R)}\Big)^{\eps/2}$from \eqref{long} gives us the desired exponentially small coefficient whose size is at most $2^{-\eps (s-t)/2 }\cdot 2^{-\eps s/2} = 2^{-\eps t/2} = 2^{-\eps\max(s,t)/2}$.

Theorem \ref{rsh} is completely proved.

\end{document}